\documentclass[12pt, reqno]{article}
\usepackage{amssymb,amsmath,amsthm,graphicx,multirow,setspace,url, enumerate, float, amscd, fullpage, hyperref}
\numberwithin{equation}{section}

\setlength{\textwidth}{6.5in}
\setlength{\oddsidemargin}{.1in}
\setlength{\evensidemargin}{.1in}
\setlength{\topmargin}{-.5in}
\setlength{\textheight}{8.9in}

\begin{document}

\theoremstyle{plain}

\newtheorem{conjecture}{Conjecture}
\newtheorem{theorem}{Theorem}[section]
\newtheorem{lemma}[theorem]{Lemma}
\newtheorem{corollary}[theorem]{Corollary}

\theoremstyle{definition}
\newtheorem{definition}[theorem]{Definition}
\newtheorem{remark}{Remark}[section]
\newtheorem{example}{Example}[section]

\newtheorem*{theorem*}{Theorem}
\newtheorem*{corollary*}{Corollary}

\begin{center}
\vskip 1cm{\LARGE\bf 
Counting non-standard\\
\vskip .12in
binary representations
}
\vskip 1cm
\large
Katie Anders\footnote{The author acknowledges support from National Science
Foundation grant DMS 08-38434``EMSW21-MCTP: Research Experience for
Graduate Students''.} \\
Department of Mathematics\\
University of Texas at Tyler\\
3900 University Blvd.\\
Tyler, TX 75799\\
USA\\
\href{mailto:kanders@uttyler.edu}{\tt kanders@uttyler.edu}
\end{center}

\vskip .2 in

\begin{abstract}
Let $\mathcal{A}$ be a finite subset of $\mathbb{N}$ including $0$ and $f_\mathcal{A}(n)$ be the number of ways to write $n=\sum_{i=0}^{\infty}\epsilon_i2^i$, where $\epsilon_i\in\mathcal{A}$.  We consider asymptotics of the summatory function $s_\mathcal{A}(r,m)$ of $f_\mathcal{A}(n)$ from $m2^r$ to $m2^{r+1}-1$ and show that $s_{\mathcal{A}}(r,m)\approx c(\mathcal{A},m)\left|\mathcal{A}\right|^r$ for some $c(\mathcal{A},m)\in\mathbb{Q}$.

\end{abstract}

\section{Introduction}\label{Introduction}
Let $f_{\mathcal{A}}(n)$ denote the number of ways to write $n=\sum_{i=0}^{\infty}\epsilon_i2^i$, where $\epsilon_i$ belongs to the set
\[
\mathcal{A}:=\{0=a_0, a_1,\ldots,a_z\}, 
\]
with $a_i\in\mathbb{N}$ and $a_i< a_{i+1}$ for all $0\leq i\leq z-1$.  For more on this topic, see the author's previous work \cite{my first paper}.  We parameterize $\mathcal{A}$ in terms of its $s$ even elements and $(z+1)-s:=t$ odd elements as follows:
\[
\mathcal{A}=\{0=2b_1,2b_2,\ldots,2b_s,2c_1+1,\ldots,2c_t+1\}.
\]

If $n$ is even, then $\epsilon_0=0,2b_2,2b_3,\ldots$, or $2b_s$ and
\[
 f_\mathcal{A}(n)=f_\mathcal{A}(n/2)+f_\mathcal{A}((n-2b_2)/2)+f_\mathcal{A}((n-2b_3)/2)+\cdots+f_\mathcal{A}((n-2b_s)/2).
\]
Writing $n=2\ell$, we have
\[
f_\mathcal{A}(2\ell)=f_\mathcal{A}(\ell)+f_\mathcal{A}(\ell-b_2)+f_\mathcal{A}(\ell-b_3)+\cdots+f_\mathcal{A}(\ell-b_s),
\]
so for any even $n$, $f_{\mathcal{A}}(n)$ satisfies a recurrence relation of order $b_s$.

Similarly, if $n=2\ell+1$ is odd, then $\epsilon_0=2c_1+1,2c_2+1,\ldots,$ or $2c_t+1$, and
\[
f_\mathcal{A}(2\ell+1)=f_\mathcal{A}(\ell-c_1)+f_\mathcal{A}(\ell-c_2)+\cdots+f_\mathcal{A}(\ell-c_t),
\]
so for any odd $n$, $f_{\mathcal{A}}(n)$ satisfies a recurrence relation of order $c_t$.  Dennison, Lansing, Reznick, and the author \cite{4 paper} gave this argument for $f_{\mathcal{A},b}(n)$, the $b$-ary representation of $n$ with coefficients from $\mathcal{A}$, using residue classes $\operatorname{mod} b$.

\begin{example}\label{0134 even and odd recurrences}
Let $\mathcal{A}=\{0,1,3,4\}$.  We can write $\mathcal{A}=\{2(0),2(0)+1,2(1)+1, 2(2)\}$.  Then
\begin{equation}\label{0134 recurrences}
f_{\mathcal{A}}(2\ell)=f_{\mathcal{A}}(\ell)+f_{\mathcal{A}}(\ell-2)\quad\text{ and }\quad f_{\mathcal{A}}(2\ell+1)=f_{\mathcal{A}}(\ell)+f_{\mathcal{A}}(\ell-1).
\end{equation}
\end{example}

In general, let 
\[
\omega_k(m)=
\left(
\begin{array}{c}
f_{\mathcal{A}}(2^km)\\
f_{\mathcal{A}}(2^km-1)\\
\vdots\\
f_{\mathcal{A}}(2^km-a_z)
\end{array}
\right).
\]
We shall consider the fixed $(a_z+1)\times(a_z+1)$ matrix $M_{\mathcal{A}}$ such that for any $k\geq 0$,
\[
 \omega_{k+1}=M_{\mathcal{A}}\omega_k.
\]

\begin{example}
Returning to the set $\mathcal{A}=\{0,1,3,4\}$ of Example \ref{0134 even and odd recurrences} and using the equations in \eqref{0134 recurrences}, we have
\begin{align}\label{defining Ma}
 \omega_{k+1}(m)&=
\left(
\begin{array}{c}
f_{\mathcal{A}}(2^{k+1}m)\\
f_{\mathcal{A}}(2^{k+1}m-1)\\
f_{\mathcal{A}}(2^{k+1}m-2)\\
f_{\mathcal{A}}(2^{k+1}m-3)\\
f_{\mathcal{A}}(2^{k+1}m-4)\\
\end{array}
\right)=
\left(
\begin{array}{c}
f_{\mathcal{A}}(2^km)+f_{\mathcal{A}}(2^km-2)\\
f_{\mathcal{A}}(2^km-1)+f_{\mathcal{A}}(2^km-2)\\
f_{\mathcal{A}}(2^km-1)+f_{\mathcal{A}}(2^km-3)\\
f_{\mathcal{A}}(2^km-2)+f_{\mathcal{A}}(2^km-3)\\
f_{\mathcal{A}}(2^km-2)+f_{\mathcal{A}}(2^km-4)\\
\end{array}
\right)\\
&=\left(
\begin{array}{c c c c c}
1 &0 &1 &0 &0\\
0 &1 &1 &0 &0\\
0 &1 &0 &1 &0\\
0 &0 &1 &1 &0\\
0 &0 &1 &0 &1
\end{array}
\right)
\left(
\begin{array}{c}
f_{\mathcal{A}}(2^km)\\
f_{\mathcal{A}}(2^km-1)\\
f_{\mathcal{A}}(2^km-2)\\
f_{\mathcal{A}}(2^km-3)\\
f_{\mathcal{A}}(2^km-4)\\
\end{array}
\right)\nonumber
\end{align}
If $M_{\mathcal{A}}$ is the matrix in \eqref{defining Ma}, then $\omega_{k+1}(m)=M_{\mathcal{A}}\omega_k(m)$.
\end{example}

We now review some basic concepts of sequences from Section 1.8 of Lidl and Niederreiter \cite{Lidl&N} and include a matrix view of recurrence relations, following Reznick \cite{Stern notes}.

Consider a sequence $\left(b(n)\right)$ such that
\begin{equation}\label{kth order recurrence}
b(n)+c_{k-1}b(n-1)+c_{k-2}b(n-2)+\cdots+c_0b(n-k)=0
\end{equation}
for all $n\geq k$ and $c_i\in\mathbb{N}$.
By shifting the sequence, we see that
\begin{equation}\label{kth order recurrence shifted}
b(n+k)+c_{k-1}b(n+k-1)+c_{k-2}b(n+k-2)+\cdots+c_0b(n+k-k)=0
\end{equation}
for $n\geq 0$.
Then \eqref{kth order recurrence} is a \textit{homogeneous k-th order linear recurrence relation}, and $\left(b(n)\right)$ is a \textit{homogeneous k-th order linear recurrence sequence}.  The coefficients $c_0,c_1,\ldots,c_{k-1}$ are the \textit{initial values of the sequence}.  For any sequence $\left(b(n)\right)$ satisfying \eqref{kth order recurrence} we define the \textit{characteristic polynomial}
\begin{equation}\label{char poly}
 f(x)=x^k+c_{k-1}x^{k-1}+c_{k-2}x^{k-2}+\cdots+c_0.
\end{equation}

We can also consider a recurrence relation from the point of view of a matrix system, considering $k$ sequences indexed as $\left(b_i(n)\right)$ for $1\leq i\leq k$ which satisfy
\begin{equation*}
b_i(n+1)=\sum_{j=1}^k m_{ij}b_j(n)
\end{equation*}
for $n\geq 0$ and $1\leq i\leq k$.  Then
\begin{equation*}
\left(
\begin{array}{c}
b_1(n+1)\\
\vdots\\
b_k(n+1)
\end{array}
\right)
=
\left(
\begin{array}{c c c}
m_{11} &\cdots &m_{1k} \\
\vdots & &\vdots\\
m_{k1} &\cdots &m_{kk}
\end{array}
\right)
\left(
\begin{array}{c}
b_1(n)\\
\vdots\\
b_k(n)
\end{array}
\right)
\end{equation*}
for $n\geq 0$.  To simplify the notation, if $M=[m_{ij}]$ and
\begin{equation*}
\mathbf{B}(n)=
\left(
\begin{array}{c}
b_1(n)\\
\vdots\\
b_k(n)
\end{array}
\right),
\end{equation*}
then $\mathbf{B}(n+1)=M\mathbf{B}(n)$ for $n\geq 0$.
Thus $\mathbf{B}(n)=M^n\mathbf{B}(0)$ for $n\geq 0$, where
\begin{equation*}
\mathbf{B}(0)=
\left(
\begin{array}{c}
b_1(0)\\
\vdots\\
b_k(0)
\end{array}
\right)
\end{equation*}
is the vector of initial conditions.

In this matrix point of view, the \textit{characteristic polynomial} of $M$ is
\begin{equation*}
g(\lambda):=\operatorname{det}(M-\lambda I_k).
\end{equation*}
By the Cayley-Hamilton Theorem, $g(M)=\mathbf{0}$, the $k\times k $ zero matrix.

If $g(x)$ is the characteristic polynomial in \eqref{char poly}, then
\begin{equation*}
 \mathbf{0}=g(M)=M^k+c_{k-1}M^{k-1}+c_{k-2}M^{k-2}+\cdots+c_0I_k.
\end{equation*}
Hence for any $n\geq 0$,
\begin{equation*}
\mathbf{0}=M^{n+k}+c_{k-1}M^{n+k-1}+c_{k-2}M^{n+k-2}+\cdots+c_0M^n 
\end{equation*}
and thus
\begin{align*}
\mathbf{0}&=\left(M^{n+k}+c_{k-1}M^{n+k-1}+c_{k-2}M^{n+k-2}+\cdots+c_0M^n\right)\mathbf{B}(0)\\ 
 &=\mathbf{B}(n+k)+c_{k-1}\mathbf{B}(n+k-1)+c_{k-2}\mathbf{B}(n+k-2)+\cdots+c_0\mathbf{B}(n).
\end{align*}
Thus each sequence $\left(b_j(n)\right)$ satisfies the original linear recurrence \eqref{kth order recurrence shifted}.

As an additional connection between these two views of linear recurrence sequences, note that for a sequence satisfying \eqref{kth order recurrence},
\begin{equation*}
\left(
\begin{array}{c}
b(n+1)\\
b(n+2)\\
\vdots\\
b(n+k-1)\\
b(n+k)
\end{array}
\right)
=
\left(
\begin{array}{c c c c c}
0 &1 &\cdots &0 &0 \\
0 &0 &\cdots &0 &0\\
\vdots &\vdots & &\vdots &\vdots\\
0 &0 &\cdots &0 &1\\
-c_0 &-c_{1} &\cdots &-c_{k-2} &-c_{k-1}
\end{array}
\right)
\left(
\begin{array}{c}
b(n)\\
b(n+1)\\
\vdots\\
b(n+k-2)\\
b(n+k-1)
\end{array}
\right),
\end{equation*}
where this matrix, the \textit{companion matrix} to $g$, has characteristic polynomial $(-1)^kg$.

\section{Main Result}\label{Main Result}

We will use the ideas  of Section \ref{Introduction} to examine the asymptotic behavior of the summatory function $\displaystyle\sum_{n=m2^r}^{m2^{r+1}-1}f_{\mathcal{A}}(n)$, but we must first establish a lemma.

\begin{lemma}[{\cite[5.6.5 \& 5.6.9]{Matrix Theory}}]\label{eignevalues bounded by row sums}
Let $M=[m_{ij}]$ be an $n\times n$ matrix with characteristic polynomial $g(\lambda)$ and eigenvalues $\lambda_1, \lambda_2,\ldots, \lambda_y$.  Then
\[
\underset{1\leq i\leq y}\max{\left|\lambda_i\right|}\leq \underset{1\leq i\leq n}\max{\sum_{j=1}^n \left|m_{ij}\right|}.
\]

\end{lemma}

\begin{theorem}\label{main theorem}
Let $\mathcal{A},f_{\mathcal{A}}(n), M_{\mathcal{A}}$, and $\omega_k(m)$ be as above, with the additional assumption that there exists some odd $a_i\in\mathcal{A}$. Define
\[
 s_\mathcal{A}(r,m)=\sum_{n=m2^r}^{m2^{r+1}-1}f_{\mathcal{A}}(n).
\]
Let $\left|\mathcal{A}\right|$ denote the number of elements in the set $\mathcal{A}$.  Then for a fixed value of $m$,
\[
\lim_{r\to\infty}\frac{s_{\mathcal{A}}(r,m)}{|\mathcal{A}|^r}=c(\mathcal{A},m),
\]
for some constant $c(\mathcal{A},m)\in\mathbb{Q}$, so $s_{\mathcal{A}}(r,m)=c(\mathcal{A},m)\left|\mathcal{A}\right|^r(1+o(r))$.

\end{theorem}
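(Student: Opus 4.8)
The plan is to derive a first-order recurrence for $s_{\mathcal{A}}(r,m)$ in $r$ whose dominant coefficient is exactly $|\mathcal{A}|$, with a lower-order ``correction'' term governed by the window vectors $\omega_k(m)$, and then to solve this recurrence explicitly. Write $q=|\mathcal{A}|=z+1$. First I would split the defining sum by the parity of $n$: for $n=2\ell$ even I use $f_{\mathcal{A}}(2\ell)=\sum_{j=1}^s f_{\mathcal{A}}(\ell-b_j)$, and for $n=2\ell+1$ odd I use $f_{\mathcal{A}}(2\ell+1)=\sum_{i=1}^t f_{\mathcal{A}}(\ell-c_i)$, with $\ell$ running over $[m2^{r-1},m2^r)$ in both cases. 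Interchanging the two sums turns $s_{\mathcal{A}}(r,m)$ into a sum of $s+t=q$ shifted block-sums $\sum_{n=m2^{r-1}-d}^{m2^r-d-1} f_{\mathcal{A}}(n)$, one for each element-half $d\in\{b_1,\dots,b_s,c_1,\dots,c_t\}$. Each such block-sum equals $s_{\mathcal{A}}(r-1,m)$ plus a fixed number of boundary terms $f_{\mathcal{A}}(m2^{r-1}-l)$ and $-f_{\mathcal{A}}(m2^r-l)$ with $1\le l\le a_z$, which are precisely the entries of $\omega_{r-1}(m)$ and $\omega_r(m)$. Collecting them yields
\[
s_{\mathcal{A}}(r,m)=q\,s_{\mathcal{A}}(r-1,m)+\mathbf{a}^{T}\bigl(\omega_{r-1}(m)-\omega_r(m)\bigr),
\]
valid once $r$ is large enough that all indices are nonnegative, where $\mathbf{a}$ is a fixed vector of nonnegative integers depending only on $\mathcal{A}$.

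The key structural input is a bound on the spectral radius of $M_{\mathcal{A}}$. Each row of $M_{\mathcal{A}}$ records either the even recurrence (with $s$ summands) or the odd recurrence (with $t$ summands), so every row sum equals $s$ or $t$. Since $0=2b_1\in\mathcal{A}$ forces $s\ge 1$ and the standing hypothesis that $\mathcal{A}$ contains an odd element forces $t\ge 1$, we get $\max(s,t)\le (s+t)-1=q-1$. Lemma \ref{eignevalues bounded by row sums} then gives $\max_i|\lambda_i|\le\max(s,t)\le q-1<q$ for every eigenvalue $\lambda_i$ of $M_{\mathcal{A}}$. Consequently $\omega_r(m)/q^r=(M_{\mathcal{A}}/q)^r\omega_0(m)\to\mathbf{0}$, and moreover $q$ is not an eigenvalue of $M_{\mathcal{A}}$, so $qI-M_{\mathcal{A}}$ is invertible.

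To finish I would remove the correction term by a change of variable. Using $\omega_r(m)=M_{\mathcal{A}}\,\omega_{r-1}(m)$, I look for a vector $\mathbf{w}$ with
\[
\mathbf{w}^{T}(qI-M_{\mathcal{A}})=\mathbf{a}^{T}(M_{\mathcal{A}}-I);
\]
because $qI-M_{\mathcal{A}}$ is invertible over $\mathbb{Q}$ and $M_{\mathcal{A}},\mathbf{a}$ are integral, such a $\mathbf{w}$ exists and is rational. Setting $g(r)=s_{\mathcal{A}}(r,m)-\mathbf{w}^{T}\omega_r(m)$, direct substitution into the recurrence collapses the correction and gives $g(r)=q\,g(r-1)$ for large $r$, so $g(r)=q^{\,r-r_0}g(r_0)$. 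Dividing by $q^r$ and letting $r\to\infty$, the term $\mathbf{w}^{T}\omega_r(m)/q^r\to 0$ by the spectral bound, whence
\[
\lim_{r\to\infty}\frac{s_{\mathcal{A}}(r,m)}{q^r}=\frac{g(r_0)}{q^{r_0}}=\frac{s_{\mathcal{A}}(r_0,m)-\mathbf{w}^{T}\omega_{r_0}(m)}{q^{r_0}}=:c(\mathcal{A},m).
\]
Since $s_{\mathcal{A}}(r_0,m)$ and the entries of $\omega_{r_0}(m)$ are integers and $\mathbf{w}$ is rational, $c(\mathcal{A},m)\in\mathbb{Q}$, as required. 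The step I expect to be the main obstacle is the rationality: the naive geometric-series evaluation of the limit expresses it as an infinite sum of rationals, and it is only the invertibility of the integer matrix $qI-M_{\mathcal{A}}$ --- itself a consequence of the Lemma's strict bound $\max(s,t)<q$ --- that packages this limit as a single rational number. Writing the recurrence's error term as an \emph{exact} linear functional of the $\omega_k(m)$, rather than a crude order-of-magnitude estimate, is what makes this packaging possible.
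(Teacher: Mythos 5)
Your proposal is correct, and its first two-thirds track the paper's proof closely: you derive the same first-order inhomogeneous recurrence $s_{\mathcal{A}}(r,m)=|\mathcal{A}|\,s_{\mathcal{A}}(r-1,m)+(\text{correction})$, where the correction is exactly the paper's $h(r)$ written as a fixed integer linear functional $\mathbf{a}^{T}(\omega_{r-1}(m)-\omega_{r}(m))$, and you invoke the same row-sum lemma (your observation that every row sum is exactly $s$ or $t$, each at least $1$, is a slightly sharper version of the paper's ``at most $|\mathcal{A}|-1$''). Where you genuinely diverge is in solving the recurrence. The paper treats it as an inhomogeneous linear recurrence, notes via Cayley--Hamilton that $h$ is annihilated by the characteristic polynomial $g$ of $M_{\mathcal{A}}$, writes the general solution as $c_1|\mathcal{A}|^r+\sum_i p_i(\lambda_i,r)$, and extracts $c_1$ by applying $g$ to the sequence $s_{\mathcal{A}}(\cdot,m)$, obtaining the explicit quotient $c_1=\sum_k\alpha_k s_{\mathcal{A}}(r+k,m)\big/\bigl(|\mathcal{A}|^r g(|\mathcal{A}|)\bigr)$. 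You instead homogenize directly: since $|\mathcal{A}|I-M_{\mathcal{A}}$ is invertible (by the same spectral bound), a rational $\mathbf{w}$ with $\mathbf{w}^{T}(|\mathcal{A}|I-M_{\mathcal{A}})=\mathbf{a}^{T}(M_{\mathcal{A}}-I)$ exists, and $g(r)=s_{\mathcal{A}}(r,m)-\mathbf{w}^{T}\omega_r(m)$ satisfies the pure geometric recurrence. Your route avoids appealing to the general structure theory of solutions of inhomogeneous recurrences (the form $\sum c_{ij}r^{j-1}\lambda_i^r$, including the implicit non-resonance condition $g(|\mathcal{A}|)\neq 0$), and it makes the rationality of $c(\mathcal{A},m)$ immediate from the integrality of $s_{\mathcal{A}}(r_0,m)$ and $\omega_{r_0}(m)$ together with $\mathbf{w}\in\mathbb{Q}^{a_z+1}$; the paper's approach, in exchange, yields the computable closed formula \eqref{c_1 formula} that is used in the examples. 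Both arguments are valid; just make sure to state explicitly (as the paper implicitly does) that the recurrence and the identity $\omega_r=M_{\mathcal{A}}\omega_{r-1}$ hold once $r$ is large enough that all arguments $m2^{r-1}-l$ are nonnegative.
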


\begin{proof}
Let $g(\lambda):=\operatorname{det}(M_{\mathcal{A}}-\lambda I)$ be the characteristic polynomial of $M_{\mathcal{A}}$ with eigenvalues $\lambda_1,\lambda_2,\ldots,\lambda_y$, where each $\lambda_i$ has multiplicity $e_i$. We can write
\[
 g(\lambda)=\sum_{k=0}^{a_z+1}\alpha_k\lambda^k.
\]
By Cayley-Hamilton, we know that $g\left(M_{\mathcal{A}}\right)=\mathbf{0}$.  Thus we have
\[
 \mathbf{0}=g\left(M_{\mathcal{A}}\right)=\sum_{k=0}^{a_z+1}\alpha_k M_{\mathcal{A}}^k
\]
and hence, for all $r$,
\[
 \mathbf{0}=\left(\sum_{k=0}^{a_z+1}\alpha_k M_{\mathcal{A}}^k\right)\omega_r(m)=\sum_{k=0}^{a_z+1}\alpha_k\omega_{r+k}(m).
\]
Since
\[
 \omega_{r+k}(m)=
\left(
\begin{array}{c}
f_{\mathcal{A}}(2^{r+k}m)\\
f_{\mathcal{A}}(2^{r+k}m-1)\\
\vdots\\
f_{\mathcal{A}}(2^{r+k}m-a_z)\\
\end{array}
\right),
\]
we have
\begin{equation}\label{answers h(r)}
\sum_{k=0}^{a_z+1}\alpha_kf(2^{r+k}m-j)=0
\end{equation}
for all $0\leq j\leq a_z$.

Let $I_r=\{2^r,2^r+1,2^r+2,\ldots,2^{r+1}-1\}$.  Then $I_r=2I_{r-1}\cup\left(2I_{r-1}+1\right)$.  Thus
\begin{align*}
 s_\mathcal{A}(r,m)&=\sum_{n=m2^r}^{m2^{r+1}-1}f_{\mathcal{A}}(n)\\
  &=\sum_{n=m2^{r-1}}^{m2^r-1}f_{\mathcal{A}}(2n)+f_{\mathcal{A}}(2n+1)\\
  &=\sum_{n=m2^{r-1}}^{m2^r-1}f_{\mathcal{A}}(n)+f_{\mathcal{A}}(n-b_2)+\cdots+f_{\mathcal{A}}(n-b_s)+f_{\mathcal{A}}(n-c_1)+\cdots+f_{\mathcal{A}}(n-c_t).
\end{align*}
Since
\[
 \sum_{n=m2^{r-1}}^{m2^r-1}f_\mathcal{A}(n-k)=\sum_{n=m2^{r-1}}^{m2^r-1}f_{\mathcal{A}}(n)+\sum_{j=1}^k\left(f_{\mathcal{A}}(m2^{r-1}-j)-f_\mathcal{A}(m2^r-j)\right),
\]
we deduce that
\begin{align*}
 s_{\mathcal{A}}(r,m)&=\left|\mathcal{A}\right|\sum_{n=m2^{r-1}}^{m2^r-1}f_{\mathcal{A}}(n)+h(r)\\
&=\left|\mathcal{A}\right|s_{\mathcal{A}}(r-1,m)+h(r),
\end{align*}
where
\[
h(r)=\sum_{i=2}^s\sum_{j=1}^{b_i} \left(f_{\mathcal{A}}(m2^{r-1}-j)-f_\mathcal{A}(m2^r-j)\right)\\
 +\sum_{i=1}^t\sum_{j=1}^{c_i} \left(f_{\mathcal{A}}(m2^{r-1}-j)-f_{\mathcal{A}}(m2^r-j)\right)
\]
and
\[
 \sum_{k=0}^{a_z+1}\alpha_kh(r+k)=0
\]
by Equation \eqref{answers h(r)}.

Thus we have an inhomogeneous recurrence relation for $s_{\mathcal{A}}(r,m)$ and will first consider the corresponding homogeneous recurrence relation 
\[
s_{\mathcal{A}}(r,m)=\left|\mathcal{A}\right|s_{\mathcal{A}}(r-1,m), 
\]
which has solution $s_{\mathcal{A}}(r,m)=c_1\left|\mathcal{A}\right|^r$.  Then the solution to our inhomogeneous recurrence relation is of the form
\[
 s_{\mathcal{A}}(r,m)=c_1\left|\mathcal{A}\right|^r+\sum_{i=1}^yp_i(\lambda_i,r),
\]
where
\[
p_i(\lambda_i, r)=\sum_{j=1}^{e_i}c_{ij}r^{j-1}\lambda_i^r.
\]

By Lemma \ref{eignevalues bounded by row sums}, $\left|\lambda_i\right|$ is bounded above by the maximum row sum of $M_{\mathcal{A}}$, which is at most $\left|\mathcal{A}\right|-1$ since all elements of $M_{\mathcal{A}}$ are either $0$ or $1$ and  by assumption not all elements have the same parity.  Hence the $c_1|\mathcal{A}|^r$ term dominates $s_{\mathcal{A}}(r,m)$ as $r\to\infty$, so
\[
\lim_{r\to\infty}\frac{s_{\mathcal{A}}(r,m)}{|\mathcal{A}|^r}=c_1.
\] 

Observe that
\[
\sum_{k=0}^{a_z+1}\alpha_k\sum_{i=1}^y p_i\left(\lambda_i,r+k\right)=0.
\]
Thus we can compute $\sum_{k=0}^{a_z+1}\alpha_ks_{\mathcal{A}}(r+k,m)$, and for sufficiently large $r$,
\[
 \sum_{k=0}^{a_z+1}\alpha_ks_{\mathcal{A}}(r+k,m)=c_1\sum_{k=0}^{a_z+1}\alpha_k\left|\mathcal{A}\right|^{r+k} +0=c_1\left|\mathcal{A}\right|^rg\left(\left|\mathcal{A}\right|\right).
\]
Then we can solve for $c_1$ to see that
\begin{equation}\label{c_1 formula}
 c_1=c(\mathcal{A},m):=\frac{\sum_{k=0}^{a_z+1}\alpha_ks_{\mathcal{A}}(r+k,m)}{\left|\mathcal{A}\right|^rg\left(\left|\mathcal{A}\right|\right)}.
\qedhere
\end{equation}

\end{proof}

\section{Examples}\label{Examples}

\begin{example}\label{018}
Let $\mathcal{A}=\{0,1,8\}$.  Then
\begin{equation}\label{018 even recurrence}
f_\mathcal{A}(2\ell)=f_\mathcal{A}(\ell)+f_\mathcal{A}(\ell-4)
\end{equation}
and
\begin{equation}\label{018 odd recurrence}
f_\mathcal{A}(2\ell+1)=f_\mathcal{A}(\ell),
\end{equation}
so
\begin{equation*}
\left(
\begin{array}{c}
f_{\mathcal{A}}(2^{k+1}m)\\
f_{\mathcal{A}}(2^{k+1}m-1)\\
f_{\mathcal{A}}(2^{k+1}m-2)\\
f_{\mathcal{A}}(2^{k+1}m-3)\\
f_{\mathcal{A}}(2^{k+1}m-4)\\
f_{\mathcal{A}}(2^{k+1}m-5)\\
f_{\mathcal{A}}(2^{k+1}m-6)\\
f_{\mathcal{A}}(2^{k+1}m-7)\\
f_{\mathcal{A}}(2^{k+1}m-8)\\
\end{array}
\right)
=
\left(
\begin{array}{c c c c c c c c c}
1 &0 &0 &0 &1 &0 &0 &0 &0\\
0 &1 &0 &0 &0 &0 &0 &0 &0\\
0 &1 &0 &0 &0 &1 &0 &0 &0\\
0 &0 &1 &0 &0 &0 &0 &0 &0\\
0 &0 &1 &0 &0 &0 &1 &0 &0\\
0 &0 &0 &1 &0 &0 &0 &0 &0\\
0 &0 &0 &1 &0 &0 &0 &1 &0\\
0 &0 &0 &0 &1 &0 &0 &0 &0\\
0 &0 &0 &0 &1 &0 &0 &0 &1\\
\end{array}
\right)
\left(
\begin{array}{c}
f_{\mathcal{A}}(2^km)\\
f_{\mathcal{A}}(2^km-1)\\
f_{\mathcal{A}}(2^km-2)\\
f_{\mathcal{A}}(2^km-3)\\
f_{\mathcal{A}}(2^km-4)\\
f_{\mathcal{A}}(2^km-5)\\
f_{\mathcal{A}}(2^km-6)\\
f_{\mathcal{A}}(2^km-7)\\
f_{\mathcal{A}}(2^km-8)\\
\end{array}
\right).
\end{equation*}
If $M_\mathcal{A}$ is the matrix above, then $\omega_{k+1}(m)=M_\mathcal{A}\omega_k(m)$.  The characteristic polynomial of $M_\mathcal{A}$ is 
\begin{equation}\label{char poly of 018}
g(x)=1 - 3 x + 3 x^2 - 3 x^3 + 6 x^4 - 6 x^5 + 3 x^6 - 3 x^7 + 3 x^8 - x^9.
\end{equation}
We then compute
\begin{align*}
s_\mathcal{A}(3,1)&-3s_\mathcal{A}(4,1)+3s_\mathcal{A}(5,1)-3s_\mathcal{A}(6,1)+6s_\mathcal{A}(7,1)-6s_\mathcal{A}(8,1)\\
&+3s_\mathcal{A}(9,1)-3s_\mathcal{A}(10,1)+3s_\mathcal{A}(11,1)-s_\mathcal{A}(12,1)\\
&=-59184
\end{align*}
Using the formula from Theorem \ref{main theorem}, we see that
\[
c(\mathcal{A},1)=\frac{-59184}{g(3)\cdot27}=\frac{-59184}{-5408\cdot27}=\frac{137}{338}.
\]
\end{example}

\begin{example}\label{013}
Let $\mathcal{A}=\{0,1,3\}$.  Then
\begin{equation}\label{0,1,3 even recurrence}
f_\mathcal{A}(2\ell)=f_\mathcal{A}(\ell)
\end{equation}
and
\begin{equation}\label{0,1,3 odd recurrence}
f_\mathcal{A}(2\ell +1)=f_\mathcal{A}(\ell)+f_\mathcal{A}(\ell-1),
\end{equation}
so
\[
\left(
\begin{array}{c}
f_{\mathcal{A}}(2^{k+1}m)\\
f_{\mathcal{A}}(2^{k+1}m-1)\\
f_{\mathcal{A}}(2^{k+1}m-2)\\
f_{\mathcal{A}}(2^{k+1}m-3)\\
\end{array}
\right)
=
\left(
\begin{array}{c c c c}
1 &0 &0 &0\\
0 &1 &1 &0\\
0 &1 &0 &0\\
0 &0 &1 &1\\
\end{array}
\right)
\left(
\begin{array}{c}
f_{\mathcal{A}}(2^km)\\
f_{\mathcal{A}}(2^km-1)\\
f_{\mathcal{A}}(2^km-2)\\
f_{\mathcal{A}}(2^km-3)\\
\end{array}
\right).
\]
Hence $M_{\mathcal{A}}=\left(
\begin{array}{c c c c}
1 &0 &0 &0\\
0 &1 &1 &0\\
0 &1 &0 &0\\
0 &0 &1 &1\\
\end{array}
\right)$
satisfies $\omega_{k+1}(m)=M_{\mathcal{A}}\omega_k(m)$.  The characteristic polynomial of $M_{\mathcal{A}}$ is 
\begin{equation}\label{char poly of 0,1,3}
g(x)=(x-1)^2(x^2-x-1).
\end{equation}

Let $F_k$ denote the $k$-th Fibonacci number.  Then 
\begin{equation}\label{Fibonacci 013}
f_\mathcal{A}(2^k-1)=F_{k+1}
\end{equation} 
for all $k\geq 0$.  This can be shown by using induction and Equations \eqref{0,1,3 even recurrence}  and \eqref{0,1,3 odd recurrence}.

Considering the summatory function with $m=1$ and using Equations \eqref{0,1,3 even recurrence},\eqref{0,1,3 odd recurrence}, and \eqref{Fibonacci 013}, we see that
\begin{align*}
s_{\mathcal{A}}(r,1)&=\sum_{n=2^r}^{2^{r+1}-1} f_\mathcal{A}(n)\\
&=\sum_{n=2^{r-1}}^{2^r-1}\left(f_\mathcal{A}(2n)+f_\mathcal{A}(2n+1)\right)\\
&=\sum_{n=2^{r-1}}^{2^r-1}\left(f_\mathcal{A}(n)+f_\mathcal{A}(n)+f_\mathcal{A}(n-1)\right)\\
&=2s_\mathcal{A}(r-1,1)+\sum_{n=2^{r-1}}^{2^r-1}f_\mathcal{A}(n-1)\\
&=2s_\mathcal{A}(r-1,1)+\sum_{n=2^{r-1}}^{2^r-1}f_\mathcal{A}(n)+f_\mathcal{A}(2^{r-1}-1)-f_\mathcal{A}(2^r-1)\\
&=3s_\mathcal{A}(r-1,1)+f_\mathcal{A}(2^{r-1}-1)-f_\mathcal{A}(2^r-1)\\
&=3s_\mathcal{A}(r-1,1)+F_r-F_{r+1}\\
&=3s_\mathcal{A}(r-1,1)-F_{r-1}.
\end{align*}

This is an inhomogeneous recurrence relation for $s_\mathcal{A}(r,1)$.  We first consider the corresponding homogeneous recurrence relation $s_\mathcal{A}(r,1)=3s_\mathcal{A}(r-1,1)$, which has solution
\[
s_\mathcal{A}(r,1)=c_13^r,
\]
for some $c_1$ in $\mathbb{Q}$.  Recall that the characteristic polynomial $g(x)$ of $M_{\mathcal{A}}$ has roots $1,\phi$, and $\bar{\phi}$, where the first has multiplicity $2$ and the others have multiplicity $1$.  Hence the solution to the inhomogeneous recurrence relation is
\begin{equation}\label{solution to inhomogeneous example 013}
s_\mathcal{A}(r,1)=c_13^r+c_2\phi^r+c_3\bar{\phi}^r+c_4(1)^r+c_5r(1)^r,
\end{equation}
where $c_2,c_3, c_4, c_5\in\mathbb{Q}$.  Observe that the $c_13^r$ summand will dominate as $r\to\infty$, so
\[
 \lim_{r\to\infty}\frac{s_{\mathcal{A}}(r,1)}{3^r}=c_1
\]
and $s_{\mathcal{A}}(r,1)\approx c_13^r$.

Using Equations \eqref{char poly of 0,1,3} and \eqref{solution to inhomogeneous example 013}, we can compute $c_1$ as 
\begin{align*}
s_\mathcal{A}(r+2,1)-s_\mathcal{A}(r+1,1)-s_\mathcal{A}(r,1)&=c_13^r(3^2-3-1)+c_2\phi^r(\phi^2-\phi-1)\\
&\quad+c_3\bar{\phi}^r(\bar{\phi}^2-\bar{\phi}-1)+c_4(1^2-1-1)\\
&\quad+c_5(r+2-(r+1)-r)\\
&=c_13^r\cdot 5-c_4-c_5(r-1).
\end{align*}
Plugging in $r=2$, $r=1$, and $r=0$ and computing sums, we see that $c_1=4/5$.  Hence 
\[
 \lim_{r\to\infty}\frac{s_{\mathcal{A}}(r,1)}{3^r}=\frac{4}{5}
\]
and $s_{\mathcal{A}}(r,1)\approx \frac{4}{5}(3)^r$.
\end{example}

\begin{example}\label{023}
Let $\mathcal{\tilde{A}}=\{0,2,3\}$.  Then
\begin{equation}\label{0,2,3 even recurrence}
f_\mathcal{\tilde{A}}(2\ell)=f_\mathcal{\tilde{A}}(\ell)+f_\mathcal{\tilde{A}}(\ell-1)
\end{equation}
and
\begin{equation}\label{0,2,3 odd recurrence}
f_\mathcal{\tilde{A}}(2\ell +1)=f_\mathcal{\tilde{A}}(\ell-1),
\end{equation}
so
\[
\left(
\begin{array}{c}
f_{\mathcal{\tilde{A}}}(2^{k+1}m)\\
f_{\mathcal{\tilde{A}}}(2^{k+1}m-1)\\
f_{\mathcal{\tilde{A}}}(2^{k+1}m-2)\\
f_{\mathcal{\tilde{A}}}(2^{k+1}m-3)\\
\end{array}
\right)
=
\left(
\begin{array}{c c c c}
1 &1 &0 &0\\
0 &0 &1 &0\\
0 &1 &1 &0\\
0 &0 &0 &1\\
\end{array}
\right)
\left(
\begin{array}{c}
f_{\mathcal{\tilde{A}}}(2^km)\\
f_{\mathcal{\tilde{A}}}(2^km-1)\\
f_{\mathcal{\tilde{A}}}(2^km-2)\\
f_{\mathcal{\tilde{A}}}(2^km-3)\\
\end{array}
\right).
\]
Hence $M_{\mathcal{\tilde{A}}}=\left(
\begin{array}{c c c c}
1 &1 &0 &0\\
0 &0 &1 &0\\
0 &1 &1 &0\\
0 &0 &0 &1\\
\end{array}
\right)$
satisfies $\omega_{k+1}(m)=M_{\mathcal{\tilde{A}}}\omega_k(m)$.  The characteristic polynomial of $M_{\mathcal{\tilde{A}}}$ is 
\begin{equation}\label{char poly of 0,2,3}
g(x)=(x-1)^2(x^2-x-1).
\end{equation}

Let $F_k$ denote the $k$-th Fibonacci number.  Then 
\begin{equation}\label{Fibonacci}
f_\mathcal{\tilde{A}}(2^k-1)=F_{k-1}
\end{equation} 
for all $k\geq 1$.  This can be shown by using induction and Equations \eqref{0,2,3 even recurrence}  and \eqref{0,2,3 odd recurrence} to prove that $f_\mathcal{\tilde{A}}(2^k-2)=F_k$ for all $k\geq 2$ and observing that Equation \eqref{0,2,3 odd recurrence} gives $f_\mathcal{\tilde{A}}(2^k-1)=f_\mathcal{\tilde{A}}(2^{k-1}-2)$. 

Considering the summatory function with $m=1$ and using Equations \eqref{0,2,3 even recurrence},\eqref{0,2,3 odd recurrence}, and \eqref{Fibonacci} and manipulations similar to those in Example \ref{013}, we see that
\[
s_{\mathcal{\tilde{A}}}(r,1)=3s_\mathcal{\tilde{A}}(r-1,1)-2F_{r-3}.
\]

Again, the corresponding homogeneous recurrence relation has solution
\[
s_\mathcal{\tilde{A}}(r,1)=c_13^r,
\]
for some $c_1$ in $\mathbb{Q}$, and we can use Equation \eqref{char poly of 0,2,3} to see that the solution to the inhomogeneous recurrence relation is
\begin{equation}\label{solution to inhomogeneous example}
s_\mathcal{\tilde{A}}(r,1)=c_13^r+c_2\phi^r+c_3\bar{\phi}^r+c_4(1)^r+c_5r(1)^r,
\end{equation}
where $c_2,c_3, c_4, c_5\in\mathbb{Q}$.  Observe that the $c_13^r$ summand will dominate as $r\to\infty$, so
\[
 \lim_{r\to\infty}\frac{s_{\mathcal{\tilde{A}}}(r,1)}{3^r}=c_1
\]
and $s_{\mathcal{\tilde{A}}}(r,1)\approx c_13^r$.

Using Equations \eqref{char poly of 0,2,3} and \eqref{solution to inhomogeneous example}, we can compute $c_1$ as 
\[
s_\mathcal{\tilde{A}}(r+2,1)-s_\mathcal{\tilde{A}}(r+1,1)-s_\mathcal{\tilde{A}}(r,1)=c_13^r\cdot 5-c_4-c_5(r-1).
\]
Plugging in $r=2$, $r=1$, and $r=0$ and computing sums, we see that $c_1=2/5$.  Hence 
\[
 \lim_{r\to\infty}\frac{s_{\mathcal{\tilde{A}}}(r,1)}{3^r}=\frac{2}{5}
\]
and $s_{\mathcal{\tilde{A}}}(r,1)\approx \frac{2}{5}(3)^r$.
\end{example}

In Example \ref{013}, we had $\mathcal{A}=\{0,1,3\}$, and in Example \ref{023}, we had $\mathcal{\tilde{A}}=\{0,2,3\}=\{3-3,3-1,3-0\}$.  We found $c(\mathcal{A},1)$ in Example \ref{013} and $c(\mathcal{\tilde{A}},1)$ in Example \ref{023} and can observe that they have the same denominator.

Given a set $\mathcal{A}=\{0,a_1,\ldots,a_z\}$, let $\mathcal{\tilde{A}}$ be
\[
\mathcal{\tilde{A}}:=\{0,a_z-a_{z-1},\ldots,a_z-a_1,a_z\}.
\] 
The following chart displays the value $c(\mathcal{A},1)$ for various sets $\mathcal{A}$ and their corresponding sets $\mathcal{\tilde{A}}$, where $s_{\mathcal{A}}(r,1)\approx c(\mathcal{A},1)|\mathcal{A}|^r$.  Note that in all cases the denominator of $c(\mathcal{A},1)$ is the same as that of $c(\mathcal{\tilde{A}},1)$.  The following theorem will show that this holds for all $\mathcal{A}$.

\begin{table}[H]
\begin{center}
\renewcommand{\arraystretch}{2}
\begin{tabular}{l l l l| l l l}
$\mathcal{A}$ & $c(\mathcal{A},1)$ &$\operatorname{N}(c(\mathcal{A},1))$ & &$\mathcal{\tilde{A}}$ &$c(\mathcal{\tilde{A}},1)$ &$\operatorname{N}(c(\mathcal{\tilde{A}},1))$\\ \hline
$\{0,1,2,4\}$ & $\frac{7}{11}$ &$0.636$ & & $\{0,2,3,4\}$ & $\frac{3}{11}$ &$0.273$\\
$\{0,1,3,4\}$ & $\frac{1}{2}$ &$0.500$ & & $\{0,1,3,4\}$ & $\frac{1}{2}$ &$0.500$\\
$\{0,2,3,6\}$ & $\frac{33}{149}$ &$0.221$ & & $\{0,3,4,6\}$ & $\frac{21}{149}$ &$0.141$\\
$\{0,1,6,9\}$ & $\frac{6345}{28670}$ &$0.221$ & & $\{0,3,8,9\}$ & $\frac{2007}{28670}$ &$0.070$\\
$\{0,1,7,9\}$ & $\frac{2069}{10235}$ &$0.202$ & & $\{0,2,8,9\}$ & $\frac{1023}{10235}$ &$0.100$\\
$\{0,4,5,6,9\}$ & $\frac{4044}{83753}$ &$0.048$ & & $\{0,3,4,5,9\}$ & $\frac{6716}{83753}$ & $0.080$\\
\end{tabular}
\end{center}\caption{$c(\mathcal{A},1)$ for various sets $\mathcal{A}$ and $\mathcal{\tilde{A}}$}\label{sets of size 4 and 5 growth coefficients}
\end{table}

\begin{theorem}
Let $\mathcal{A}, f_{\mathcal{A}}(n), M_{\mathcal{A}}=[m_{\alpha,\beta}]$, and $\mathcal{\tilde{A}}$ be as above, with $0\leq \alpha, \beta \leq a_z$.  Let $M_{\mathcal{\tilde{A}}}=\left[m^{'}_{\alpha,\beta}\right]$ be the $(a_z+1) \times (a_z+1)$ matrix such that
\[
\left(
\begin{array}{c}
f_{\mathcal{\tilde{A}}}(2n)\\
f_{\mathcal{\tilde{A}}}(2n-1)\\
\vdots\\
f_{\mathcal{\tilde{A}}}(2n-a_z)
\end{array}
\right)
=M_{\mathcal{\tilde{A}}}
\left(
\begin{array}{c}
f_{\mathcal{\tilde{A}}}(n)\\
f_{\mathcal{\tilde{A}}}(n-1)\\
\vdots\\
f_{\mathcal{\tilde{A}}}(n-a_z)
\end{array}
\right).
\]
Then $m_{\alpha,\beta}=m^{'}_{a_z-\alpha,a_z-\beta}$.
\end{theorem}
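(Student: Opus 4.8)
The plan is to first replace the parity-dependent bookkeeping in the definition of $M_{\mathcal{A}}$ by a single closed-form rule for its entries, and then to obtain the reflection identity by a one-line change of indices using $\mathcal{\tilde{A}}=\{a_z-a:a\in\mathcal{A}\}$.

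First I would unify the even and odd recurrences into one statement about the entries. Row $\alpha$ of $M_{\mathcal{A}}$ expresses $f_{\mathcal{A}}(2n-\alpha)$ in terms of the $f_{\mathcal{A}}(n-\beta)$. If $\alpha=2p$ is even, then $2n-\alpha=2(n-p)$, and the even recurrence $f_{\mathcal{A}}(2\ell)=\sum_i f_{\mathcal{A}}(\ell-b_i)$ (with $2b_i$ ranging over the even elements of $\mathcal{A}$) puts a $1$ in column $\beta$ exactly when $\beta-p=b_i$, that is, when $2\beta-\alpha=2(\beta-p)$ is an even element of $\mathcal{A}$. If $\alpha=2p+1$ is odd, then $2n-\alpha=2(n-p-1)+1$, and the odd recurrence $f_{\mathcal{A}}(2\ell+1)=\sum_i f_{\mathcal{A}}(\ell-c_i)$ (with $2c_i+1$ ranging over the odd elements) puts a $1$ in column $\beta$ exactly when $\beta-p-1=c_i$, that is, when $2\beta-\alpha=2(\beta-p-1)+1$ is an odd element of $\mathcal{A}$. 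In each case the parity of $2\beta-\alpha$ automatically matches that of $\alpha$, so the two computations collapse to the single rule
\[
m_{\alpha,\beta}=1 \iff 2\beta-\alpha\in\mathcal{A},
\]
with $m_{\alpha,\beta}=0$ otherwise; the membership condition already forces $0\le 2\beta-\alpha\le a_z$ because $\mathcal{A}\subseteq\{0,1,\dots,a_z\}$. I would sanity-check this against Examples \ref{0134 even and odd recurrences} and \ref{013}, where it reproduces every entry.

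Since $\mathcal{\tilde{A}}$ again has largest element $a_z$ and contains $0$, the matrix $M_{\mathcal{\tilde{A}}}$ is also $(a_z+1)\times(a_z+1)$, and the derivation above is parity-agnostic, so it applies verbatim to give $m^{'}_{\alpha,\beta}=1 \iff 2\beta-\alpha\in\mathcal{\tilde{A}}$. Because $\mathcal{\tilde{A}}=\{a_z-a:a\in\mathcal{A}\}$, the condition $2\beta-\alpha\in\mathcal{\tilde{A}}$ is equivalent to $a_z-(2\beta-\alpha)\in\mathcal{A}$. The final step is the substitution $\alpha\mapsto a_z-\alpha$, $\beta\mapsto a_z-\beta$: computing $2(a_z-\beta)-(a_z-\alpha)=a_z-(2\beta-\alpha)$ yields
\[
m^{'}_{a_z-\alpha,\,a_z-\beta}=1 \iff a_z-\bigl(2(a_z-\beta)-(a_z-\alpha)\bigr)\in\mathcal{A} \iff 2\beta-\alpha\in\mathcal{A} \iff m_{\alpha,\beta}=1,
\]
which is exactly the claimed identity $m_{\alpha,\beta}=m^{'}_{a_z-\alpha,a_z-\beta}$.

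The only real obstacle is the first step: verifying that the even/odd case split genuinely merges into the single membership rule $2\beta-\alpha\in\mathcal{A}$. This requires tracking the boundary behaviour, where $2\beta-\alpha$ falls outside $\{0,\dots,a_z\}$ and must produce a zero entry, and confirming the parity bookkeeping that an even row $\alpha$ can only pick up even elements of $\mathcal{A}$ while an odd row picks up only odd elements. Once that unified formula is established, the reflection identity is an immediate index substitution requiring no further analysis.
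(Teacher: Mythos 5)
Your proposal is correct and follows essentially the same route as the paper: both arguments reduce to the characterization $m_{\alpha,\beta}=1\iff 2\beta-\alpha\in\mathcal{A}$ (you derive it by explicitly merging the even and odd recurrences, the paper states it directly from $2n-\alpha=2(n-\beta)+K$ with $K\in\mathcal{A}$), and both finish with the same index substitution using $\mathcal{\tilde{A}}=\{a_z-a:a\in\mathcal{A}\}$. No gaps.
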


\begin{proof}
Recall that we can write
\[
\mathcal{A}:=\{0,2b_2,\ldots,2b_s,2c_1+1,\ldots,2c_t+1\}, 
\]
so that
\[
f_{\mathcal{A}}(2n-2j)=f_{\mathcal{A}}(n-j)+f_{\mathcal{A}}(n-j-b_2)+\cdots+f_{\mathcal{A}}(n-j-b_s)
\]
and
\[
f_{\mathcal{A}}(2n-2j-1)=f_{\mathcal{A}}(n-j-c_1-1)+\cdots+f_{\mathcal{A}}(n-j-c_t-1)
\]
for $j$ sufficiently large.  

Then $m_{\alpha,\beta}=1$ if and only if $f_{\mathcal{A}}(n-\beta)$ is a summand in the recursive sum that expresses $f_{\mathcal{A}}(2n-\alpha)$, which happens if and only if $2n-\alpha=2(n-\beta)+K$, where $K\in\mathcal{A}$, and this is equivalent to $2\beta-\alpha$ belonging to $\mathcal{A}$.

Now $m^{'}_{a_z-\alpha,a_z-\beta}=1$ if and only if $f_{\mathcal{\tilde{A}}}(n-(a_z-\beta))$ is a summand in the recursive sum that expresses $f_{\mathcal{\tilde{A}}}(2n-(a_z-\alpha))$, which happens if and only if $2n-(a_z-\alpha)=2(n-(a_z-\beta))+\tilde{K}$, where $\tilde{K}\in\mathcal{\tilde{A}}$.  This means that $a_z+\alpha-2\beta=\tilde{K}$, which gives $2\beta-\alpha\in\mathcal{A}$.
\end{proof}

Thus $M_\mathcal{A}=S^{-1}M_{\mathcal{\tilde{A}}}S$, where
\[
S=
\left(
\begin{array}{c c c c c}
0 &0 &\cdots &0 &1\\
0 &0 &\cdots &1 &0\\
\vdots &\vdots & &\vdots &\vdots\\
0 &1 &\cdots &0 &0\\
1 &0 &\cdots &0 &0
\end{array}
\right),
\]
so $M_{\mathcal{A}}$ and $M_\mathcal{\tilde{A}}$ have the same characteristic polynomial, {\cite[1.3.3]{Matrix Theory}}.  We see that $c(\mathcal{A},m)$ and $c(\mathcal{\tilde{A}},m)$ have the same denominator.

\section{Open Questions}

A nicer formula for $c(\mathcal{A},m)$ than that given in Equation \eqref{c_1 formula} is desired and seems likely.  To that end, we have computed values of $c(\mathcal{A})$ for a variety of sets $\mathcal{A}$ but have not been able to detect any patterns. Table \ref{sets of size 3 growth coefficients} shows $c(\mathcal{A},1)$ for all sets of the form $\mathcal{A}=\{0,1,t\}$, where $2\leq t\leq 17$, and we have obtained the following bounds on $c(\mathcal{A},1)$ for sets $\mathcal{A}$ of this form.

Let $t\in\mathbb{N}$ with $t>1$ and $\mathcal{A}=\{0,1,t\}$.  Choose $k$ such that $2^k<t\leq 2^{k+1}$.  Recall that $f_\mathcal{A}(s)$ is the number of ways to write $s$ in the form
\[
s=\sum_{i=0}^{\infty}\epsilon_i2^i, \text{ where }\epsilon_i\in\{0,1,t\}.
\]
Then 
\[
s_\mathcal{A}(r,1)=\sum_{n=2^r}^{2^{r+1}-1}f_\mathcal{A}(n)\approx c(\mathcal{A},1)3^r,
\]
as shown in Theroem \ref{main theorem}.  Thus
\begin{align*}
\sum_{s=1}^{2^n-1}f_\mathcal{A}(s)&=\sum_{j=0}^{n-1}\sum_{s=2^j}^{2^{j+1}-1}f_\mathcal{A}(s)\approx\sum_{j=0}^{n-1}c(\mathcal{A},1)3^j\\
&=c(\mathcal{A},1)\left(\frac{3^n-1}{2}\right)\approx \frac{1}{2}c(\mathcal{A},1)3^n.
\end{align*}

Consider choosing $\epsilon_i\in\{0,1,t\}$ for $0\leq i\leq n-k-3$ and $\epsilon_i\in\{0,1\}$ for $n-k-2\leq i\leq n-2$.  Then
\begin{align*}
\sum_{i=0}^{n-2} \epsilon_i 2^i&\leq t+t\cdot2+t\cdot2^2+\cdots+t\cdot2^{n-k-3}+2^{n-k-2}+2^{n-k-1}+\cdots+2^{n-2}\\
&<t\cdot2^{n-k-2}+2^{n-1}-1\\
&\leq2^{k+1}\cdot2^{n-k-2}+2^{n-1}-1\\
&=2^n-1\\
&<2^n.
\end{align*}
There are $3^{n-k-2}\cdot 2^{k+1}$ such sums, and each of them is counted in $\sum_{s=1}^{2^n-1}f_\mathcal{A}(s)$.  Thus 
\[
\frac{1}{2}c(\mathcal{A},1)3^n\geq 3^{n-k-2}\cdot 2^{k+1}=3^n\cdot \frac{2^{k+1}}{3^{k+2}},
\]
and so $c(\mathcal{A},1)\geq \left(\frac{2}{3}\right)^{k+2}$.

Now suppose there exists some $i_0\geq n-k$ such that $\epsilon_{i_0}=t$.  Then
\[
\sum_{i=0}^{\infty}\epsilon_i2^i\geq t\cdot2^{i_0}\geq t\cdot2^{n-k}>2^k2^{n-k}=2^n.
\]
Thus the sums counted in $\sum_{s=1}^{2^n-1}f_\mathcal{A}(s)$ all have the propety that $\epsilon_i\in\{0,1\}$ for $n-k\leq i\leq n-1$, and there are $3^{n-k}\cdot 2^k$ such sums.
Hence $3^{n-k}\cdot 2^k\geq \frac{1}{2}c(\mathcal{A},1)\cdot 3^n$ and $\frac{2^{k+1}}{3^k}\geq c(\mathcal{A},1)$.

Combining the above, we see that
\[
\frac{2^{k+1}}{3^k}\cdot \frac{2}{9}\leq c(\mathcal{A},1)\leq \frac{2^{k+1}}{3^k}.
\]
To compare these bounds with Table \ref{sets of size 3 growth coefficients}, note that if $8<t\leq15$, then $k=3$, and we have 
\[
0.132\leq c(\mathcal{A},1)\leq 0.593
\]
for $\mathcal{A}=\{0,1,t\}$, with $t$ in this range.

We have also computed $c(\mathcal{A},1)$ for some sets with $|\mathcal{A}|=4$ and $|\mathcal{A}|=5$, and that data is contained in Table \ref{sets of size 4 and 5 growth coefficients}.  Larger sets have not been considered because computations become increasingly tedious as the cardinality of $\mathcal{A}$ grows.

\begin{table}
\begin{center}
\renewcommand{\arraystretch}{1.5}
\begin{tabular}{l l l l| l l l}
$\mathcal{A}$ & $c(\mathcal{A},1)$ &$\operatorname{N}(c(\mathcal{A},1))$ & &$\mathcal{A}$ &$c(\mathcal{A},1)$ &$\operatorname{N}(c(\mathcal{{A}},1))$\\ \hline
$\{0,1,2\}$ & $1$ &$1.000$ & & $\{0,1,3\}$ & $\frac{4}{5}$ &$0.800$\\
$\{0,1,4\}$ & $\frac{5}{8}$ &$0.625$ & & $\{0,1,5\}$ & $\frac{14}{25}$ &$0.560$\\
$\{0,1,6\}$ & $\frac{35}{71}$ &$0.493$ & & $\{0,1,7\}$ & $\frac{176}{391}$ &$0.450$\\
$\{0,1,8\}$ & $\frac{137}{338}$ &$0.405$ & & $\{0,1,9\}$ & $\frac{1448}{3775}$ &$0.384$\\
$\{0,1,10\}$ & $\frac{1990}{5527}$ &$0.360$ & & $\{0,1,11\}$ & $\frac{3223}{9476}$ &$0.340$\\
$\{0,1,12\}$ & $\frac{2020}{6283}$ &$0.322$ & &  $\{0,1,13\}$ & $\frac{47228}{154123}$ &$0.306$\\
$\{0,1,14\}$ & $\frac{35624}{122411}$ &$0.291$ & & $\{0,1,15\}$ & $\frac{699224}{2501653}$ &$0.280$\\
$\{0,1,16\}$ & $\frac{68281}{256000}$ &$0.267$ & & $\{0,1,17\}$ & $\frac{38132531}{146988000}$ &$0.259$\\
\end{tabular}
\end{center}\caption{$c(\mathcal{A},1)$ for all sets of the form $\mathcal{A}=\{0,1,t\}$, where $2\leq t\leq 17$}.\label{sets of size 3 growth coefficients} 
\end{table}

\section{Acknowledgements}
The author acknowledges support from National Science Foundation grant DMS 08-38434 ``EMSW21-MCTP: Research Experience for Graduate Students.''  The results in this paper were part of the author's doctoral dissertation \cite{my thesis} at the University of Illinois at Urbana-Champaign.  The author wishes to thank Professor Bruce Reznick for his time, ideas, and encouragement.

\bigskip
\hrule
\bigskip

\noindent 2010 {\it Mathematics Subject Classification}:
Primary 11A63.

\noindent \emph{Keywords: } 
digital representation, non-standard binary representation, summatory function.

\bigskip
\hrule
\bigskip

\vspace*{+.1in}
\noindent
Received August 25, 2015

\end{document}